\newtheorem{theorem}{Theorem}
\newtheorem{corollary}[theorem]{Corollary}
\theoremstyle{remark}
\newtheorem{remark}[theorem]{Remark}
\newcommand{\abs}[1]{\lvert #1\rvert}
\newcommand{\calA}{\mathcal{A}}
\newcommand{\E}{\mathbb{E}}
\renewcommand{\MR}[1]{}
\renewcommand{\P}{\mathbb{P}}
\newcommand{\V}{\mathbb{V}}
\newcommand{\TODO}[1]%
{\par\fbox{\begin{minipage}{0.9\linewidth}\textbf{TODO:} #1\end{minipage}}\par}
\title[Waiting Time Distributions of Runs]{Application of Smirnov Words to Waiting Time Distributions of Runs}
\author{Uta Freiberg}
\address{Institut f\"ur Stochastik und Anwendungen\\
Universit\"at Stuttgart\\
Pfaffenwaldring 57\\
D-70569 Stuttgart\\
Germany}
\email{uta.freiberg@mathematik.uni-stuttgart.de}
\thanks{Parts of the
  article were written while Uta Freiberg was a visitor at
  Stellenbosch University.}
\author{Clemens Heuberger}
\address{Institut f\"ur Mathematik\\
Alpen-Adria-Universit\"at Klagenfurt\\
Universit\"atsstra\ss e 65--67\\9020 Klagenfurt\\Austria}
\email{clemens.heuberger@aau.at}
\thanks{Clemens Heuberger is supported by the Austrian Science Fund (FWF):
  P~24644-N26.
  Parts of the
  article were written while Clemens Heuberger was a visitor at
  Stellenbosch University.}
\author{Helmut Prodinger}
\address{Department of Mathematical Sciences\\Stellenbosch University\\7602 Stellenbosch\\
 South Africa}
\email{hproding@sun.ac.za}
\thanks{Helmut Prodinger
  is supported by an incentive grant of the NRF of South Africa.}
\keywords{Waiting time distribution, run, Smirnov word, generating function}
\subjclass[2010]{05A05; 
05A15, 
60C05, 
60G40
}
\begin{document}
\begin{abstract}
  Consider infinite random words over a finite alphabet where the letters occur as an i.i.d.\ sequence according to some arbitrary distribution on the alphabet.
  The expectation and the variance of
  the waiting time for the first completed $h$-run of any letter (i.e., first
  occurrence of $h$ subsequential equal letters) is computed.

  The expected waiting time for the completion of $h$-runs of $j$ arbitrary
  distinct letters is also given.
\end{abstract}
\maketitle

\section{Introduction}
In \cite{Szekely:1986:parad}, the following paradox is presented: In measuring the regularity of a die one may use waiting times for sequences of the same side of certain lengths. For example, if ones throws a regular six-sided die, it takes $7$ throws on average to get a number subsequently twice and $43$ throws to get a number three times in succession. Heuristically, one would expect that a \textit{smaller} number of throws is needed to get such sequences with a \textit{biased} die. This leads to the definition to call one die  \textit{more regular} than another one if more throws are needed to get sequences of one side of a certain length. Now the paradox is that there exist dice---say $A$ and $B$---where the mean waiting time for two digits in a row is longer for die $A$ while the mean waiting time for three digits in a row is longer for die $B$ (an example has been given by M\'ori, see \cite[p.~62]{Szekely:1986:parad}). The consequence of this paradox is that one cannot use the mean waiting times for such runs as a (sufficient) criterion for the definition of regularity of a die (or whatever random sequence of digits from a finite alphabet).

This paradox gave motivation to calculate first and second moments of such waiting times for so called \textit{$h$-runs}. In particular, the formula for the first moment of the waiting time for the first completed $h$-run of \textit{any} digit---which was already given in \cite{Szekely:1986:parad}---is proved without using the strong law of large numbers or any other limit theorem (see Theorem~\ref{theorem:first-h-run-expectation}). Moreover, the variance of the waiting time for the first completed $h$-run is presented in the same theorem. We then compute the waiting time for the completion of $h$-runs of $j$ different letters in Theorem~\ref{theorem:j-h-runs-expectation-general}. In particular, for $j=r$ (the number of possible letters), we get results about the waiting time for a full collection of runs.

Our fundamental technique is the calculation of generating functions of such waiting times; our main trick is the combination of two very useful observations: Firstly, we make use of the very simple but crucial identity \eqref{eq:Y_n=B_j} (see~\cite{Flajolet-Gardy-Thimonier:1992:birth}) which already has been a powerful tool in the treatment of the coupon collector problem and/or the birthday paradox. Secondly, we use the generating function of Smirnov words (see \cite{Flajolet-Sedgewick:ta:analy}) to count words with a limited number of repetitions of single letters using an appropriate substitution.

We conclude the paper in Section~\ref{sec:algorithmic} with an algorithmic approach for specific situations.

\section{Preliminaries}\label{section:preliminaries}
We consider infinite words $X_1X_2\ldots$ over the alphabet $\calA=\{1, \ldots, r\}$
where the random variables $X_i$ are i.i.d.\ with $\P\{X_i=k\}=p_k>0$ for some
$p_1$, \ldots, $p_r$.

We say that a letter $\ell\in\calA$ has an \emph{$h$-run in $X_1\ldots
X_n$} if there are $h$ consecutive letters $\ell$ in the word $X_1\ldots X_n$,
or in other words, if the word $\ell^h=\ell\ell\ldots\ell$ (with $h$
repetitions) is a factor of the word $X_1\ldots X_n$.

We consider the random variable $B_j$ giving the first position $n$ such that
there exist $j$ of the $r$ letters having an $h$-run in $X_1\ldots X_n$. This is a
random variable on the infinite product space consisting of all infinite words
endowed with the product measure.

On the other hand, we consider the random variable $Y_n$ counting the number of
letters which had an $h$-run in $X_1\ldots X_n$. This is a random variable on
the finite product space consisting of all words of length $n$, again with its
product measure.

By construction, we have
\begin{equation}\label{eq:Y_n=B_j}
\P\{Y_n\ge j\}=\P\{B_j\le n\},
\end{equation}
cf.~\cite[Eqn.~(6)]{Flajolet-Gardy-Thimonier:1992:birth}.
As a consequence, we obtain (cf.~\cite[Eqn.~(7)]{Flajolet-Gardy-Thimonier:1992:birth})
\begin{equation}\label{eq:expectation-general}
\E(B_j)=\sum_{n\ge 0}\P\{B_j>n\}=\sum_{n\ge 0}\P\{Y_n<j\}=\sum_{q=0}^{j-1}\sum_{n\ge0}\P\{Y_n= q\}.
\end{equation}
With the generating function
\begin{equation}\label{eq:definition-G_j-z}
  G_j(z) = \sum_{n\ge 0} \P\{Y_n<j\}z^n,
\end{equation}
this amounts to
\begin{equation*}
  \E(B_j) = G_j(1).
\end{equation*}

To compute the variance, we first note that
\begin{align*}
  \E(B_j^2) &= \sum_{n\ge 0} n^2 \P\{B_j = n \}
  = \sum_{n\ge 0} n^2 (\P\{B_j > n-1 \} - \P\{B_j > n \})\\
  &=\sum_{n\ge 0} (n+1)^2 \P\{B_j > n \} - \sum_{n\ge 0} n^2 \P\{B_j > n \}\\
  &= \sum_{n\ge 0}(2n+1) \P\{B_j > n \}
  = \sum_{n\ge 0}(2n+1) \P\{Y_n <j \}\\
  &=2 G_j'(1) + G_j(1)
\end{align*}
where we used~\eqref{eq:Y_n=B_j} and the definition of $G_j(z)$ given
in~\eqref{eq:definition-G_j-z}. We conclude that
\begin{equation}\label{eq:variance}
  \V(B_j)=\E(B_j^2)-\E(B_j)^2 = 2G_j'(1) + G_j(1) - G_j(1)^2.
\end{equation}

A \emph{Smirnov word} is defined to be any word which has no consecutive equal
letters. The ordinary generating function of Smirnov words over the alphabet
$\calA$ is
\begin{equation}\label{eq:Smirnov}
S(v_1,\dots,v_r)=\frac1{\displaystyle 1-\sum_{i=1}^r\frac{v_i}{1+v_i}}
\end{equation}
where $v_i$ counts the number of occurrences of the letter $i$, cf.\ Flajolet
and Sedgewick~\cite[Example~III.24]{Flajolet-Sedgewick:ta:analy}.

\section{Moments of the first \texorpdfstring{$h$}{h}-run}
In this section, we study the first occurrence of any $h$-run.
In the framework of Section~\ref{section:preliminaries}, this corresponds to
the case $j=1$ and the random variable $B_1$.

We prove the following result on the expectation of $B_1$:

\begin{theorem}\label{theorem:first-h-run-expectation}If $p_i<1$ for $1\le i\le r$, the expectation and the
  variance of the first occurrence of an $h$-run are
  \begin{align}
    \E(B_1)&
    =\frac1{\displaystyle\sum_{i=1}^r\frac1{p_i^{-1}+\cdots+p_i^{-h}}} \label{eq:first-h-run-expectation}\\
    \intertext{and}
    \V(B_1)&=\frac{\displaystyle\sum_{i=1}^r\biggl(\frac{p_i +p_i^h}{1-p_i^h} - 2h\frac{ p_i^h(1-p_i)}{(1-p_i^h)^2}\biggr)}{\displaystyle\biggl(\sum_{i=1}^r\frac1{p_i^{-1}+\cdots+p_i^{-h}}\biggr)^2}.\label{eq:first-h-run-variance}
  \end{align}
\end{theorem}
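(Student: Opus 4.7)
The plan is to reduce, via Section~\ref{section:preliminaries}, to computing $G_1(z)=\sum_{n\ge0}\P\{Y_n=0\}z^n$, read off $G_1(1)$ and $G_1'(1)$, and apply~\eqref{eq:variance}.

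First, I would identify $\P\{Y_n=0\}$ with the total probability weight of words of length $n$ in which no letter appears in $h$ consecutive positions. Such a word is obtained uniquely from a Smirnov word by inflating each of its letters $i$ into a run of $k$ copies of $i$ with $1\le k\le h-1$. This leads to substituting
\[
v_i \;\mapsto\; p_iz+(p_iz)^2+\cdots+(p_iz)^{h-1}
\]
in~\eqref{eq:Smirnov}. Since $1+v_i=(1-(p_iz)^h)/(1-p_iz)$, we have $v_i/(1+v_i)=(p_iz-(p_iz)^h)/(1-(p_iz)^h)$, so
\[
G_1(z)=\frac{1}{\displaystyle 1-\sum_{i=1}^r\frac{p_iz-(p_iz)^h}{1-(p_iz)^h}}.
\]

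For the expectation, I would set $z=1$ and use $\sum_i p_i=1$ to absorb the leading $1$ into the sum. Each summand of the denominator becomes
\[
p_i-\frac{p_i-p_i^h}{1-p_i^h}=\frac{p_i^h(1-p_i)}{1-p_i^h}=\frac{1}{p_i^{-1}+\cdots+p_i^{-h}},
\]
which gives~\eqref{eq:first-h-run-expectation}. Note that the hypothesis $p_i<1$ is what guarantees that the denominator $D(1)$ is strictly positive, so $\E(B_1)$ is finite.

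For the variance, I would write $G_1=1/D$, so that~\eqref{eq:variance} reads $\V(B_1)=(2\sum_i f_i'(1)+D(1)-1)/D(1)^2$ where $f_i(z)=v_i(z)/(1+v_i(z))$. Differentiation gives $f_i'(z)=v_i'(z)/(1+v_i(z))^2$, and using the closed form $\sum_{k=1}^{h-1}kp_i^k=(p_i(1-p_i^h)-hp_i^h(1-p_i))/(1-p_i)^2$ I would obtain
\[
f_i'(1)=\frac{p_i}{1-p_i^h}-\frac{hp_i^h(1-p_i)}{(1-p_i^h)^2}.
\]
Combined with $D(1)-1=-\sum_i(p_i-p_i^h)/(1-p_i^h)$ (again from $\sum_ip_i=1$), the numerator collapses to $\sum_i\bigl((p_i+p_i^h)/(1-p_i^h)-2hp_i^h(1-p_i)/(1-p_i^h)^2\bigr)$, which is~\eqref{eq:first-h-run-variance}. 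I expect the only real obstacle to be bookkeeping: merging the three $i$-indexed sums from $2G_1'(1)$, $G_1(1)$ and $-G_1(1)^2$ relies on $\sum_ip_i=1$ and on keeping the signs from differentiating $1/D$ straight; none of this is conceptually hard, but it is easy to err by a sign.
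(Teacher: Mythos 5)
Your proposal is correct and follows essentially the same route as the paper: the same Smirnov-word inflation yielding $G_1(z)=1/\bigl(1-\sum_{i}\frac{p_iz-(p_iz)^h}{1-(p_iz)^h}\bigr)$, the same use of $\sum_i p_i=1$ at $z=1$, and the same application of~\eqref{eq:variance}; your value of $f_i'(1)$ agrees with the paper's $G_1'(1)$ and the numerator collapses exactly as you describe. The only difference is cosmetic bookkeeping (differentiating $v_i/(1+v_i)$ termwise instead of the assembled denominator).
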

The result~\eqref{eq:first-h-run-expectation} on the expectation also appears (without proof) in \cite[p.~62]{Szekely:1986:parad}.
Each summand of the numerator of \eqref{eq:first-h-run-variance} is indeed non-negative, because this is equivalent to
\begin{equation*}
  \frac{p_i + p_i^h}2\cdot \frac{1+p_i+\cdots + p_i^{h-1}}{h}\ge p_i^h,
\end{equation*}
which is true by the inequality between the arithmetic and the geometric mean, applied to both factors.

\begin{proof}[Proof of Theorem~\ref{theorem:first-h-run-expectation}]
\allowdisplaybreaks
In the case $j=1$, \eqref{eq:expectation-general} reads
\begin{equation}\label{eq:expectation-first-run}
\E(B_1)=\sum_{n\ge0}\P\{Y_n= 0\}.
\end{equation}
Thus we have to determine the probability that a word of length $n$ does not
have any $h$-run. Such words arise from a Smirnov word by replacing single
letters by runs of length in $\{1, \ldots, h-1\}$ of the same letter.

In terms of generating function, this corresponds to replacing each $v_i$ by
\begin{equation*}
p_i z+\cdots+(p_i z)^{h-1}=\frac{p_i z-(p_i z)^h}{1-p_i z}.
\end{equation*}
Here, $z$ marks the length of the word. We obtain
\begin{align*}
  G_1(z)&=\sum_{n\ge 0}\P\{Y_n=0\}z^n=S\left(\frac{p_1 z-(p_1 z)^h}{1-p_1 z}, \ldots,
    \frac{p_r z-(p_r z)^h}{1-p_r z}\right)\\
  &=\frac1{\displaystyle
    1-\sum_{i=1}^r\frac{\frac{p_i z-(p_i z)^{h}}{1-p_i z}}{1+\frac
      {p_i z-(p_i z)^{h}}{1-p_i z}}}
  =\frac1{1-\displaystyle \sum_{i=1}^r\frac{p_i z-(p_i z)^{h}}{1-(p_i z)^{h}}}.
\end{align*}

By \eqref{eq:expectation-first-run}, we are only interested in $z=1$:
\begin{equation*}
\E(B_1)=\sum_{n\ge0}\P\{Y_n= 0\}=G_1(1)=\frac1{1- \sum_{i=1}^r\frac{p_i-p_i^{h}}{1-p_i^{h}}}.
\end{equation*}
Replacing the summand $1$ in the denominator by $p_1+\cdots+p_r$ yields
\begin{align*}
  \E(B_1) &= \frac1{\displaystyle\sum_{i=1}^r
    \Bigl(p_i-\frac{p_i-p_i^{h}}{1-p_i^{h}}\Bigr)}
  =\frac1{\displaystyle\sum_{i=1}^r\frac{p_i-p_i^{h+1}-p_i+p_i^{h}}{1-p_i^{h}}}\\
  &=\frac1{\displaystyle\sum_{i=1}^r\frac{p_i^h(1-p_i)}{1-p_i^{h}}}
  =\frac1{\displaystyle\sum_{i=1}^r\frac1{p_i^{-1}+\cdots+p_i^{-h}}}.
\end{align*}
For the variance, we compute $G_1'(1)$ as
\begin{align*}
  G_1'(1)&= \E(B_1)^2\sum_{i=1}^r \frac{(p_i - hp_i^{h})(1-p_i^h) +
      (p_i-p_i^h)h p_i^{h}}{(1-p_i^h)^2}\\
  &=\E(B_1)^2\sum_{i=1}^r
    \frac{p_i - hp_i^h - p_i^{h+1}+hp_i^{2h}
      + hp_i^{h+1}-hp_i^{2h}}{(1-p_i^h)^2}\\
  &=\E(B_1)^2\sum_{i=1}^r
    \frac{p_i(1-p_i^{h}) - hp_i^h(1-p_i)}{(1-p_i^h)^2}\\
  &=\E(B_1)^2\biggl(\sum_{i=1}^r
    \frac{p_i}{1-p_i^h}
    -h\sum_{i=1}^r\frac{ p_i^h(1-p_i)}{(1-p_i^h)^2}\biggr).
\end{align*}
By \eqref{eq:variance}, we obtain
\begin{align*}
  \V(B_1) &= 2G_1'(1) + G_1(1)- G_1(1)^2 \\
  &=
  \E(B_1)^2\biggl( -1 + 2\sum_{i=1}^r
    \frac{p_i}{1-p_i^h} - 2h\sum_{i=1}^r\frac{ p_i^h(1-p_i)}{(1-p_i^h)^2} \\
    &\qquad\qquad\qquad\qquad\qquad+
    \sum_{i=1}^r \frac{p_i^h(1-p_i)}{1-p_i^h} \biggr)\\
    &=\E(B_1)^2\biggl(\sum_{i=1}^r\frac{-p_i+p_i^{h+1}
      +2p_i+p_i^h-p_i^{h+1}}{1-p_i^h} \\
    &\qquad\qquad\qquad\qquad\qquad- 2h\sum_{i=1}^r\frac{ p_i^h(1-p_i)}{(1-p_i^h)^2}\biggr)\\
  &=\E(B_1)^2\biggl(\sum_{i=1}^r\frac{p_i +p_i^h}{1-p_i^h} - 2h\sum_{i=1}^r\frac{ p_i^h(1-p_i)}{(1-p_i^h)^2}\biggr).
\end{align*}
Together with \eqref{eq:first-h-run-expectation}, we obtain~\eqref{eq:first-h-run-variance}.
\end{proof}

\section{Expectation of the first occurrence of \texorpdfstring{$h$}{h}-runs of \texorpdfstring{$j$}{j} letters}
In this section, we consider the first position where $j$ of the letters $1$,
\ldots, $r$ had an $h$-run. In the terminology of
Section~\ref{section:preliminaries}, this corresponds to the random variable
$B_j$.

We prove the following theorem on the expectation of $B_j$.

\begin{theorem}\label{theorem:j-h-runs-expectation-general}
  For $ i\in \calA$, let
  \begin{equation}\label{eq:expectation-j-h-runs-definition-a-g}
      \alpha_i:=\frac{p_i-p_i^h}{1-p_i}, \qquad
      \gamma_i:=\frac{p_i}{1-p_i}
  \end{equation}
  and let $A_i$ and $\Gamma_i$ be the substitution operators mapping the
  variable $v_i$ to $\alpha_i$ and $\gamma_i$, respectively.

  Then the expectation of the first occurrence of $h$-runs of exactly $j$ letters is
    \begin{equation}\label{eq:expectation-j-h-runs-operator-difference}
      \E(B_j)=\biggl(\sum_{q=0}^{j-1}[y^q] \prod_{i=1}^r(y\Gamma_i+(1-y)A_i)\biggr)S(v_1,\dots,v_r),
  \end{equation}
  where $S(v_1,\ldots, v_r)$ is defined in \eqref{eq:Smirnov}.
\end{theorem}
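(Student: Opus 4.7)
The plan is to start from \eqref{eq:expectation-general}, which gives
\begin{equation*}
\E(B_j)=\sum_{q=0}^{j-1}\sum_{n\ge0}\P\{Y_n=q\},
\end{equation*}
and to extract $\sum_{n\ge0}\P\{Y_n=q\}$ from a bivariate generating function in which an auxiliary variable $y$ marks $Y_n$. For each finite word $w$ and each $i\in\calA$, let $N_i(w)\in\{0,1\}$ be $1$ if the letter $i$ has an $h$-run in $w$, so that $Y_{|w|}(w)=\sum_{i=1}^rN_i(w)$. From the identity $y^{N_i(w)}=y+(1-y)(1-N_i(w))$, taking the product over $i$ and expanding yields
\begin{equation*}
y^{Y_{|w|}(w)}=\sum_{T\subseteq\calA}y^{|T|}(1-y)^{r-|T|}\prod_{i\notin T}(1-N_i(w)),
\end{equation*}
where the indicator product equals $1$ precisely when none of the letters outside $T$ has an $h$-run in $w$. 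Summing against $\P(w)z^{|w|}$ over all finite words produces
\begin{equation*}
\sum_{n\ge0}z^n\sum_{w:|w|=n}\P(w)y^{Y_n(w)}=\sum_{T\subseteq\calA}y^{|T|}(1-y)^{r-|T|}H_T(z),
\end{equation*}
with $H_T(z)$ the generating function of words in which every letter outside $T$ avoids $h$-runs.

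Each $H_T(z)$ can be read off from the Smirnov decomposition recalled in Section~\ref{section:preliminaries}: any such word arises from a Smirnov skeleton by replacing each occurrence of $i$ by a nonempty run of $i$'s, contributing $p_iz/(1-p_iz)$ when $i\in T$ (no length restriction) and $p_iz+\cdots+(p_iz)^{h-1}$ when $i\notin T$ (runs of length at most $h-1$). Specializing to $z=1$ turns these into $\gamma_i$ and $\alpha_i$ respectively, so that
\begin{equation*}
\sum_{T\subseteq\calA}y^{|T|}(1-y)^{r-|T|}H_T(1)=\prod_{i=1}^r\bigl(y\Gamma_i+(1-y)A_i\bigr)S(v_1,\dots,v_r).
\end{equation*}
Extracting $[y^q]$ on both sides recovers $\sum_{n\ge0}\P\{Y_n=q\}$ on the left, and summing over $q=0,\dots,j-1$ together with \eqref{eq:expectation-general} delivers \eqref{eq:expectation-j-h-runs-operator-difference}.

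The main technical obstacle is the specialization at $z=1$: a direct computation gives $\gamma_i/(1+\gamma_i)=p_i$, so $\sum_i\gamma_i/(1+\gamma_i)=1$ and $S(\gamma_1,\dots,\gamma_r)$ is infinite, i.e.\ $H_\calA(z)$ blows up at $z=1$. The saving grace is that the polynomial $y^{|T|}(1-y)^{r-|T|}$ has $y$-support in $\{|T|,\dots,r\}$, so the divergent summand $T=\calA$ only affects $[y^r]$. Since the outer sum runs over $q\le j-1\le r-1$, every coefficient we actually need receives contributions only from proper subsets $T\subsetneq\calA$; for any such $T$, at least one $i\notin T$ contributes $\alpha_i/(1+\alpha_i)=(p_i-p_i^h)/(1-p_i^h)<p_i$, whence $\sum_{i\in T}p_i+\sum_{i\notin T}\alpha_i/(1+\alpha_i)<1$ and $H_T(1)$ is finite. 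The interchange of $[y^q]$ with the summation over $n$ is legitimate because all probabilities involved are non-negative and $\sum_n\P\{Y_n=q\}\le\E(B_j)<\infty$ for $q\le j-1$.
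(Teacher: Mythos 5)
Your proof is correct and is essentially the same as the paper's: both arguments rest on the Smirnov decomposition with the substitutions $\gamma_i$ (unrestricted runs) versus $\alpha_i$ (runs of length at most $h-1$), a decomposition of $\{Y_n=q\}$ over subsets of $\calA$, evaluation at $z=1$, and the packaging of the subset sum as $[y^q]\prod_{i=1}^r(y\Gamma_i+(1-y)A_i)$. The only difference is bookkeeping: you mark $Y_n$ directly with $y$ and decompose into the events ``no $h$-run outside $T$'' (generating functions $H_T$), whereas the paper marks maximal long runs with auxiliary variables $u_i$, extracts the ``exactly $M$'' events via the operators $\Delta_i=\Gamma_i-A_i$ and $Z_i=A_i$, and only converts to the $[y^q]$ form at the very end; correspondingly, your explicit observation that the divergent $T=\calA$ term contributes only to $[y^r]$ plays the role of the paper's regularity argument at $z=1$ for the summands with some $u_i=0$.
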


For $j=r$, i.e., the first occurrence of $h$-runs of all letters,
\eqref{eq:expectation-j-h-runs-operator-difference} can be simplified:

\begin{corollary}\label{corollary:all-h-runs-expectation-general}
  The expectation of the first occurrence of all $h$-runs is
  \begin{equation}\label{eq:expectation-all-h-runs-operator-difference}
       \E(B_r)=\biggl(\prod_{i=1}^r\Gamma_i -\prod_{i=1}^r(\Gamma_i-A_i)\biggr)S(v_1,\dots,v_r),
  \end{equation}
  where $\Gamma_i$, $A_i$ and $S(v_1,\ldots, v_r)$ are defined in
  \eqref{eq:expectation-j-h-runs-definition-a-g} and \eqref{eq:Smirnov}, respectively.
\end{corollary}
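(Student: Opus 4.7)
The strategy is to simply specialize Theorem~\ref{theorem:j-h-runs-expectation-general} to $j=r$ and rewrite the resulting sum of coefficient extractions in closed form. The key structural fact I would exploit is that the operator
\begin{equation*}
  P(y) := \prod_{i=1}^r\bigl(y\Gamma_i+(1-y)A_i\bigr)
\end{equation*}
is a polynomial in $y$ of degree exactly $r$, because each of the $r$ factors is linear in $y$.

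First I would note that the operators involved pairwise commute: $\Gamma_i$ and $A_k$ substitute numeric values into distinct variables $v_i$ and $v_k$ when $i\neq k$, and for each fixed $i$ the factor $y\Gamma_i+(1-y)A_i$ acts on the single variable $v_i$, so the product is a well-defined operator on $S(v_1,\dots,v_r)$ regardless of the order of factors. Next, for a polynomial $P(y)=\sum_{q=0}^{r} c_q y^q$ of degree at most $r$, I would use the elementary identity
\begin{equation*}
  \sum_{q=0}^{r-1}[y^q]P(y) \;=\; P(1) - [y^r]P(y),
\end{equation*}
applied to $P(y)$ above (with the $c_q$ being operators rather than scalars, which is harmless).

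Then the two evaluations are immediate: setting $y=1$ gives $P(1)=\prod_{i=1}^r\Gamma_i$, while rewriting each factor as $y\Gamma_i+(1-y)A_i=A_i+y(\Gamma_i-A_i)$ shows that the $y^r$-coefficient of $P(y)$ is $\prod_{i=1}^r(\Gamma_i-A_i)$, since only the linear part of each factor contributes to the top degree. Substituting into Theorem~\ref{theorem:j-h-runs-expectation-general} with $j=r$ yields~\eqref{eq:expectation-all-h-runs-operator-difference}.

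There is essentially no hard step; the only point worth a second's attention is checking that the coefficient extraction in $y$ commutes with the action of the substitution operators on $S(v_1,\dots,v_r)$, which holds because $y$ is a formal variable independent of the $v_i$'s on which $A_i$ and $\Gamma_i$ act.
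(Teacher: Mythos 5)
Your proposal is correct and follows exactly the paper's own argument: specialize Theorem~\ref{theorem:j-h-runs-expectation-general} to $j=r$, observe that $\prod_{i=1}^r(y\Gamma_i+(1-y)A_i)$ has degree $r$ in $y$, and compute the sum of all coefficients except the top one as the value at $y=1$ minus the leading coefficient $\prod_{i=1}^r(\Gamma_i-A_i)$. The extra remarks on commutativity and on coefficient extraction commuting with the substitution operators are harmless elaborations of points the paper leaves implicit.
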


In the case of equidistributed letters, i.e., $p_i=1/r$ for all $i$, we get the
following simple expression.

\begin{corollary}\label{corollary:all-h-runs-expectation-equidistribution}
  If $p_1=\cdots=p_r=1/r$, then the expectation of the first occurrence of all
  $h$-runs is
  \begin{equation*}
    \E(B_r)=\frac{r(r^h-1)}{r-1}H_r,
  \end{equation*}
  where $H_r$ denotes the $r$th harmonic number.
\end{corollary}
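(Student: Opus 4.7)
The plan is to specialize the operator formula of Corollary~\ref{corollary:all-h-runs-expectation-general} to $p_1=\cdots=p_r=1/r$ and exploit the resulting symmetry. With $p_i=1/r$ the parameters \eqref{eq:expectation-j-h-runs-definition-a-g} become independent of $i$: namely $\gamma_i=1/(r-1)$ and $\alpha_i=(r^{h-1}-1)/((r-1)r^{h-1})$. The elementary computations
\begin{equation*}
\frac{\gamma_i}{1+\gamma_i}=\frac1r,\qquad \frac{\alpha_i}{1+\alpha_i}=\frac{r^{h-1}-1}{r^h-1}
\end{equation*}
will govern every subsequent substitution into the Smirnov generating function~\eqref{eq:Smirnov}.

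Next I would expand the operator difference to make a formal cancellation explicit. Writing
\begin{equation*}
\prod_{i=1}^r(\Gamma_i-A_i)=\sum_{I\subseteq\{1,\dots,r\}}(-1)^{r-\abs{I}}\prod_{i\in I}\Gamma_i\prod_{i\notin I}A_i,
\end{equation*}
the term $I=\{1,\dots,r\}$ exactly cancels $\prod_i\Gamma_i$, so Corollary~\ref{corollary:all-h-runs-expectation-general} collapses to
\begin{equation*}
\E(B_r)=\sum_{I\subsetneq\{1,\dots,r\}}(-1)^{r-\abs{I}-1}\biggl(\prod_{i\in I}\Gamma_i\prod_{i\notin I}A_i\biggr)S(v_1,\dots,v_r).
\end{equation*}
This rearrangement is essential: $\prod_i\Gamma_i$ applied to $S$ alone would send the denominator $1-\sum_i v_i/(1+v_i)$ to $1-r\cdot(1/r)=0$, and only the cancellation renders the expression finite. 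By symmetry of $S$ in its arguments and the fact that all $\gamma_i$ (resp.\ all $\alpha_i$) coincide, every summand with $\abs{I}=k$ produces the same value; the denominator evaluates to
\begin{equation*}
1-\frac{k}{r}-(r-k)\frac{r^{h-1}-1}{r^h-1}=\frac{(r-k)(r-1)}{r(r^h-1)},
\end{equation*}
so the $k$-substitution value is $S_k=r(r^h-1)/((r-1)(r-k))$ for $0\le k\le r-1$.

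Collecting the $\binom{r}{k}$ identical contributions for each $k$ and reindexing via $j=r-k$ then yields
\begin{equation*}
\E(B_r)=\frac{r(r^h-1)}{r-1}\sum_{j=1}^{r}\binom{r}{j}\frac{(-1)^{j-1}}{j}=\frac{r(r^h-1)}{r-1}H_r,
\end{equation*}
the last equality being the classical inclusion--exclusion identity $H_r=\sum_{j=1}^{r}\binom{r}{j}(-1)^{j-1}/j$. The main conceptual point is isolating and cancelling the formally divergent $\prod_i\Gamma_i S$ term; everything else is a one-line simplification of the denominator above plus a well-known harmonic-number identity, so no real obstacle is expected.
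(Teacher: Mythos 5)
Your proof is correct and follows essentially the same route as the paper's: specialize the operator identity of Corollary~\ref{corollary:all-h-runs-expectation-general} to the uniform case, expand the product so that the (formally divergent) all-$\Gamma_i$ term cancels, use symmetry to collect the $\binom{r}{k}$ identical contributions, and finish with the identity $H_r=\sum_{j=1}^r\binom{r}{j}(-1)^{j-1}/j$. The only cosmetic difference is that you index each summand by the number of $\Gamma_i$ operators while the paper indexes by the number of $A_i$ operators; the computations agree after the reindexing $j=r-k$.
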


\begin{proof}[Proof of Theorem~\ref{theorem:j-h-runs-expectation-general}]
As in Section~\ref{section:preliminaries}, $Y_n$ is the number of letters
that have at least one run of length $\ge h$ within $X_1\ldots X_n$.

Arbitrary words arise from Smirnov words by replacing single letters by runs of
length at least $1$ of the same letter. In terms of generating functions, this
corresponds to substituting $v_i$ by
\begin{multline*}
  p_i z+\cdots+(p_i z)^{h-1}+u_i((p_i z)^{h}+(p_i z)^{h+1}+\cdots)\\
  =\frac{p_i z-(p_i z)^h+u_i(p_i z)^h}{1-p_i z}=\frac{p_i z+(u_i-1)(p_i z)^h}{1-p_i z}=:\beta_i(u_i,z).
\end{multline*}
As previously, $z$ counts the length of the word. The variable $u_i$ counts the
number of occurrences of (non-extensible) $m$-runs of the letter $i$ with $m\ge h$.

We now consider the probability generating function
\begin{equation*}
  F(u_1, \ldots, u_r; z)=S(\beta_1(u_1, z),\ldots, \beta_r(u_r,z)).
\end{equation*}
of all words. 

For $M\subseteq \calA$, let $E_{n, M}$ be the event that
exactly the letters in $M$ have an $h$-run in $X_1\ldots X_n$. By definition,
we have
\begin{equation}\label{eq:Y_n=q-decomposition}
  \{Y_n=q\} = \biguplus_{\substack{M\subseteq \calA\\ \abs{M}=q}}E_{n, M}
\end{equation}
for $q\in\{0, \ldots, r\}$.

We now compute $\P(E_{n, M})$ for some $M=\{i_1, \ldots, i_q\}$ of cardinality
$q$. We denote the letters not contained in $M$ by $\calA\setminus
M=\{s_1,\ldots, s_{n-q}\}$. By construction of the generating function, we have
\begin{equation}\label{eq:Y_n=r-probability-multi-extraction}
  \P(E_{n, M})=[z^n][u_{s_1}^0]\cdots [u_{s_{n-q}}^0]\sum_{m_{i_1}, \ldots, m_{i_q} \ge 0}[u_{i_1}^{m_{i_1}}]\cdots
  [u_{i_q}^{m_{i_q}}]F(u_1,\ldots, u_r; z).
\end{equation}

For any power series $H(u)$, we have
\begin{equation*}
  \sum_{m\ge 1}[u^m]H(u)=H(1)-H(0).
\end{equation*}
We therefore define the operators $\Delta_i$ and $Z_i$ by $\Delta_i
H(u_i)=H(1)-H(0)$ and $Z_i H(u_i)=H(0)$. With
these notations, \eqref{eq:Y_n=r-probability-multi-extraction} reads
\begin{equation}\label{eq:expectation-B_r-Y_n=r}
  \P(E_{n,M})=[z^n]\biggl(\prod_{i\in M}\Delta_i\prod_{i\notin M}Z_i\biggr) F(u_1,\ldots, u_r; z).
\end{equation}

Inserting this and \eqref{eq:Y_n=q-decomposition} in
\eqref{eq:expectation-general} yields
\begin{equation}\label{eq:expectation-B_j_with_z}
  \E(B_j)=\sum_{n\ge 0}[z^n] \sum_{\substack{M\subseteq \calA\\ \abs M<j}} \biggl(\prod_{i\in M}\Delta_i\prod_{i\notin M}Z_i\biggr) F(u_1,\ldots, u_r; z).
\end{equation}

Summing over all $n\ge 0$ amounts to setting $z=1$ as long as all summands are
non-singular at $z=1$. As $\abs M<j$, at least one of the $u_i$ is zero,
w.l.o.g.\ $u_1=0$. This implies that $[z^n]F(u_1,\ldots, u_r; z)\le[z^n]F(0,
1, \ldots, 1; z)< \rho^n$ for a suitable $0<\rho<1$ as the word $1^h$ is forbidden
as a factor. Thus $F(u_1, \ldots, u_r; z)$ is regular at $z=1$.

We note that $\beta_i(1, 1)=\gamma_i$ and $\beta_i(0, 1)=\alpha_i$ where
$\gamma_i$ and $\alpha_i$ are defined in
\eqref{eq:expectation-j-h-runs-definition-a-g}. Therefore, for $z=1$, the operator
$\Delta_i$ can be written as $\Gamma_i-A_i$. Similarly, $Z_i$ corresponds to
$A_i$. 

We have
\begin{equation*}
  \sum_{\substack{M\subseteq \calA\\ \abs M<j}}\prod_{i\in
    M}(\Gamma_i-A_i)\prod_{i\notin M}A_i =
  \sum_{q=0}^{j-1}[y^q]\prod_{i=1}^r(y\Gamma_i + (1-y)A_i).
\end{equation*}
Combining this with \eqref{eq:expectation-B_j_with_z} yields
\eqref{eq:expectation-j-h-runs-operator-difference}.
\end{proof}

\begin{proof}[Proof of Corollary~\ref{corollary:all-h-runs-expectation-general}]
  The polynomial $\prod_{i=1}^r(y\Gamma_i+(1-y)A_i)$ has degree $r$ in the
  variable $y$. Thus extracting all coefficients but the coefficient of $y^r$
  amounts to substituting $y=1$ and subtracting the coefficient of $y^r$, i.e.,
  \begin{equation*}
    \sum_{q=0}^{j-1}[y^q] \prod_{i=1}^r(y\Gamma_i+(1-y)A_i) =
    \prod_{i=1}^r \Gamma_i - \prod_{i=1}^r(\Gamma_i - A_i).
  \end{equation*}
  Inserting this into \eqref{eq:expectation-j-h-runs-operator-difference} yields
  \eqref{eq:expectation-all-h-runs-operator-difference}.
\end{proof}

\begin{proof}[Proof of
  Corollary~\ref{corollary:all-h-runs-expectation-equidistribution}] Setting
  $p_i=1/r$ yields
\begin{align*}
\gamma_i&=\frac{\frac1r}{1-\frac1r}=\frac1{r-1},&
\alpha_i&=\frac{\frac1r-(\frac1r)^h}{1-\frac1r}=\frac{1-\frac1{r^{h-1}}}{r-1},\\
\frac{\gamma_i}{1+\gamma_i}&=\frac 1r,&
\frac{\alpha_i}{1+\alpha_i}&=\frac{r^{h-1}-1}{r^h-1}.
\end{align*}
  Inserting this in \eqref{eq:expectation-all-h-runs-operator-difference}
  and collecting terms with $k$ occurrences of $A_i$ yields
  \begin{align*}
    \E(B_r)&=\sum_{k=1}^{r}\binom r k(-1)^{k+1}\frac{1}{1-\frac{r-k}{r}-k\frac{r^{h-1}-1}{r^h-1}}\\
    &=\frac{r(r^h-1)}{r-1}\sum_{k=1}^{r}\binom r k(-1)^{k+1}\frac{1}{k}
    =\frac{r(r^h-1)}{r-1}H_r,
  \end{align*}
  where we used the identity
  \begin{equation*}
    H_r=\sum_{k=1}^{r}\binom r k\frac{(-1)^{k+1}}{k},
  \end{equation*}
  cf.\ \cite{Larcombe-Fennessey-Koepf-French:2003:gould-no}.
\end{proof}

\begin{remark}
  Let run lengths $h_1$, \ldots, $h_r$ be given and consider occurrences of $h_i$-runs for the letter $i$. If $B_j$ is the first position $n$ such that there are exactly $j$ letters
  which had ``their'' run in $X_1\ldots X_n$, the results of
  Theorems~\ref{theorem:first-h-run-expectation} and
  \ref{theorem:j-h-runs-expectation-general} as well as
  Corollary~\ref{corollary:all-h-runs-expectation-general} remain valid when
  all $p_i^{h}$ are replaced by $p_i^{h_i}$.
\end{remark}

\section{Algorithmic Aspects}\label{sec:algorithmic}
For fixed $h$, the occurrence of an $h$-run of the variable $X_i$ can easily be detected by a
transducer automaton reading the occurrence probabilities $p_i$ and outputting
$1$ whenever the letter $i$ completes an $h$ run, see
Figure~\ref{fig:transducer-3-runs-1} for the case $r=2$, $h=3$ and $i=2$.
\begin{figure}[htbp]
  \centering
  \begin{tikzpicture}[auto, initial text=, >=latex]
    \node[state, accepting, initial, initial where=below] (v0) at (0.000000, 0.000000) {$$};
    \node[state, accepting] (v1) at (3.000000, 0.000000) {$2$};
    \node[state, accepting] (v2) at (3.000000, 3.000000) {$22$};
    \path[->] (v1) edge node[rotate=90.00, anchor=south] {$p_{2}\mid 0$} (v2);
    \path[->] (v1.185.00) edge node[rotate=360.00, anchor=north] {$p_{1}\mid 0$} (v0.355.00);
    \path[->] (v2) edge[loop right] node[rotate=90, anchor=north] {$p_{2}\mid 1$} ();
    \path[->] (v2) edge node[rotate=45.00, anchor=south] {$p_{1}\mid 0$} (v0);
    \path[->] (v0.5.00) edge node[rotate=0.00, anchor=south] {$p_{2}\mid 0$} (v1.175.00);
    \path[->] (v0) edge[loop left] node[rotate=90, anchor=south] {$p_{1}\mid 0$} ();
  \end{tikzpicture}
  \caption{Transducer detecting $3$-runs of the letter $1$.}
  \label{fig:transducer-3-runs-1}
\end{figure}
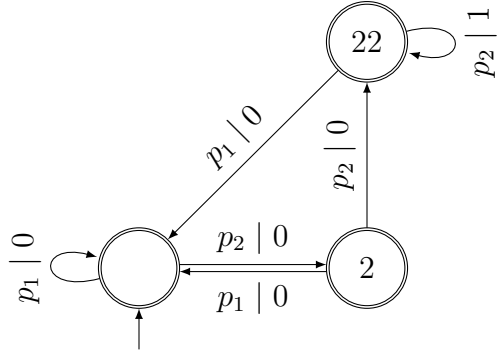

The same can be done for the first occurrence of any $h$-run, see
Figure~\ref{fig:transducer-3-runs-any} for $r=2$ and $h=3$.
\begin{figure}[htbp]
  \centering
  \begin{tikzpicture}[auto, initial text=, >=latex]
    \node[state, accepting, initial, initial where=above] (v0) at (0.000000, 1.000000) {$$};
    \node[state, accepting] (v1) at (3.000000, -0.500000) {$2$};
    \node[state, accepting] (v2) at (-3.000000, -0.500000) {$1$};
    \node[state, accepting] (v3) at (0.000000, -3.000000) {$22$};
    \node[state, accepting] (v4) at (0.000000, 3.000000) {$11$};
    \path[->] (v2.5.00) edge node[rotate=0.00, anchor=south] {$p_{2}\mid 0$} (v1.175.00);
    \path[->] (v2) edge node[rotate=49.40, anchor=south] {$p_{1}\mid 0$} (v4);
    \path[->] (v1.185.00) edge node[rotate=360.00, anchor=north] {$p_{1}\mid 0$} (v2.355.00);
    \path[->] (v1) edge node[rotate=39.81, anchor=south] {$p_{2}\mid 0$} (v3);
    \path[->] (v4) edge node[rotate=-49.40, anchor=south] {$p_{2}\mid 0$} (v1);
    \path[->] (v4) edge[loop above] node {$p_{1}\mid 1$} ();
    \path[->] (v3) edge node[rotate=320.19, anchor=south] {$p_{1}\mid 0$} (v2);
    \path[->] (v3) edge[loop below] node {$p_{2}\mid 1$} ();
    \path[->] (v0) edge node[rotate=26.57, anchor=south] {$p_{1}\mid 0$} (v2);
    \path[->] (v0) edge node[rotate=-26.57, anchor=south] {$p_{2}\mid 0$} (v1);
  \end{tikzpicture}
  \caption{Transducer detecting the first $3$-run of any letter.}
  \label{fig:transducer-3-runs-any}
\end{figure}
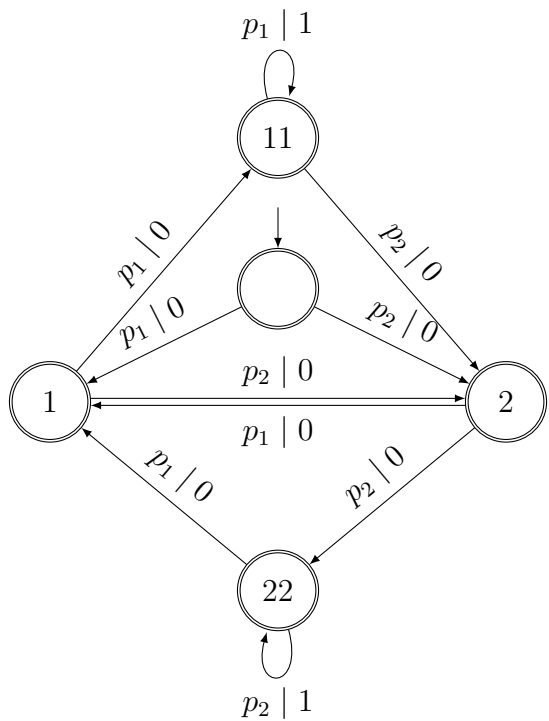

The first occurrence of $j$ runs of length $h$ could also be modelled by a
transducer.

Using the finite state machine
package~\cite{Heuberger-Krenn-Kropf:ta:finit-state} of the SageMath Mathematics
Software~\cite{Stein-others:2015:sage-mathem-6.5}, such transducers can
easily be constructed.

Accompanying this article, in \cite{Heuberger:trac-waiting-time}, an extension of SageMath to compute the expectation
and the variance of the first occurrence of a $1$ in the output of a transducer
is proposed for inclusion into SageMath.

Using this extension, the expectation and the variance of $B_1$ can be computed for
fixed $r$ and $h$ as shown in Table~\ref{tab:sage-B_1}.
\input{pygments}
\begin{table}[htbp]
\begin{center}
\begin{BVerbatim}[commandchars=\\\{\}]
\PY{k+kn}{from} \PY{n+nn}{sage.combinat} \PY{k+kn}{import} \PY{n}{finite\PYZus{}state\PYZus{}machine} \PY{k}{as} \PY{n}{FSM}

\PY{c}{# Deactivate deprecated code}
\PY{n}{FSM}\PY{o}{.}\PY{n}{FSMOldCodeTransducerCartesianProduct} \PY{o}{=} \PY{n+nb+bp}{False}
\PY{n}{FSM}\PY{o}{.}\PY{n}{FSMOldProcessOutput} \PY{o}{=} \PY{n+nb+bp}{False}

\PY{c}{# Construct the polynomial ring and set up q}
\PY{n}{R}\PY{o}{.}\PY{o}{<}\PY{n}{p}\PY{o}{>} \PY{o}{=} \PY{n}{QQ}\PY{p}{[}\PY{p}{]}
\PY{n}{q} \PY{o}{=} \PY{l+m+mi}{1} \PY{o}{-} \PY{n}{p}

\PY{c}{# Construct the Transducers detecting runs of single}
\PY{c}{# letters.  [p, p, p] is the block to detect, [p, q]}
\PY{c}{# the alphabet}
\PY{n}{p\PYZus{}runs} \PY{o}{=} \PY{n}{transducers}\PY{o}{.}\PY{n}{CountSubblockOccurrences}\PY{p}{(}
    \PY{p}{[}\PY{n}{p}\PY{p}{,} \PY{n}{p}\PY{p}{,} \PY{n}{p}\PY{p}{]}\PY{p}{,} \PY{p}{[}\PY{n}{p}\PY{p}{,} \PY{n}{q}\PY{p}{]}\PY{p}{)}
\PY{n}{q\PYZus{}runs} \PY{o}{=} \PY{n}{transducers}\PY{o}{.}\PY{n}{CountSubblockOccurrences}\PY{p}{(}
    \PY{p}{[}\PY{n}{q}\PY{p}{,} \PY{n}{q}\PY{p}{,} \PY{n}{q}\PY{p}{]}\PY{p}{,} \PY{p}{[}\PY{n}{p}\PY{p}{,} \PY{n}{q}\PY{p}{]}\PY{p}{)}

\PY{c}{# In order to detect runs of both letters, build the}
\PY{c}{# cartesian product ...}
\PY{n}{both\PYZus{}runs} \PY{o}{=} \PY{n}{p\PYZus{}runs}\PY{o}{.}\PY{n}{cartesian\PYZus{}product}\PY{p}{(}\PY{n}{q\PYZus{}runs}\PY{p}{)}
\PY{c}{# ... and add up the output by concatenating with}
\PY{c}{# the predefined "add" transducer on the alphabet}
\PY{c}{# [0, 1] We use the Python convention that any}
\PY{c}{# non-zero integer evaluates to True in boolean}
\PY{c}{# context.}
\PY{n}{first\PYZus{}run} \PY{o}{=} \PY{n}{transducers}\PY{o}{.}\PY{n}{add}\PY{p}{(}\PY{p}{[}\PY{l+m+mi}{0}\PY{p}{,} \PY{l+m+mi}{1}\PY{p}{]}\PY{p}{)}\PY{p}{(}\PY{n}{both\PYZus{}runs}\PY{p}{)}

\PY{c}{# Declare it as a Markov chain}
\PY{n}{first\PYZus{}run}\PY{o}{.}\PY{n}{on\PYZus{}duplicate\PYZus{}transition} \PY{o}{=} \PYZbs{}
    \PY{n}{FSM}\PY{o}{.}\PY{n}{duplicate\PYZus{}transition\PYZus{}add\PYZus{}input}
\PY{k}{print} \PY{n}{first\PYZus{}run}\PY{o}{.}\PY{n}{moments\PYZus{}waiting\PYZus{}time}\PY{p}{(}\PY{p}{)}
\end{BVerbatim}

\end{center}
\caption{Computation of the moments for $B_1$ with $r=2$ and $h=3$ in SageMath.}
\label{tab:sage-B_1}
\end{table}
The results coincide with those obtained in
Theorem~\ref{theorem:first-h-run-expectation}. For more examples, see the
documentation of \verb+moments_waiting_time+.

For $j>1$, we did not compute $\V(B_j)$ in general. For fixed $r$ and $h$, it
can be computed by this algorithmic approach.

Obviously, the SageMath method can be used for computing first occurrences of
everything which is recognisable by a transducer. On the other hand, explicit
results for general $r$ and $h$ such as our
Theorems~\ref{theorem:first-h-run-expectation} and
\ref{theorem:j-h-runs-expectation-general} cannot be obtained by that method.

\bibliographystyle{amsplainurl}
\bibliography{bib/cheub}

\providecommand{\Submitted}{Submitted} \providecommand{\availableat}{ available
  at } \providecommand{\alsoavailableat}{ also available at }
  \providecommand{\evavailableat}{earlier version available at }
  \providecommand{\toappearin}{To appear in } \providecommand{\toappear}{to
  appear} \providecommand{\inpreparation}{in preparation}
  \providecommand{\doi}[1]{\href{http://dx.doi.org/#1}{\path{doi:#1}}}
  \providecommand{\etc}{\emph{etc.}}\def\cprime{$'$}
\providecommand{\bysame}{\leavevmode\hbox to3em{\hrulefill}\thinspace}
\providecommand{\MR}{\relax\ifhmode\unskip\space\fi MR }
\providecommand{\MRhref}[2]{%
  \href{http://www.ams.org/mathscinet-getitem?mr=#1}{#2}
}
\providecommand{\href}[2]{#2}
\begin{thebibliography}{1}

\bibitem{Flajolet-Gardy-Thimonier:1992:birth}
Philippe Flajolet, Dani{\`e}le Gardy, and Lo{\"y}s Thimonier,
  \href{http://dx.doi.org/10.1016/0166-218X(92)90177-C}{\emph{Birthday paradox,
  coupon collectors, caching algorithms and self-organizing search}}, Discrete
  Appl. Math. \textbf{39} (1992), no.~3, 207--229. \MR{1189469 (93i:68107)}

\bibitem{Flajolet-Sedgewick:ta:analy}
Philippe Flajolet and Robert Sedgewick,
  \href{http://dx.doi.org/10.1017/CBO9780511801655}{\emph{Analytic
  combinatorics}}, Cambridge University Press, Cambridge, 2009.

\bibitem{Heuberger:trac-waiting-time}
Clemens Heuberger, \emph{{FiniteStateMachine}: {M}oments of waiting time},
  \url{http://trac.sagemath.org/ticket/18070}, 2015.

\bibitem{Heuberger-Krenn-Kropf:ta:finit-state}
Clemens Heuberger, Daniel Krenn, and Sara Kropf,
  \href{http://arxiv.org/abs/1404.7458}{\emph{Automata and transducers in the
  computer algebra system {S}age}}, 2014, arXiv:1404.7458 [cs.FL].

\bibitem{Larcombe-Fennessey-Koepf-French:2003:gould-no}
Peter~J. Larcombe, Eric~J. Fennessey, Wolfram~A. Koepf, and David~R. French,
  \emph{On {G}ould's identity {N}o.\ 1.45}, Util. Math. \textbf{64} (2003),
  19--24. \MR{2012532}

\bibitem{Stein-others:2015:sage-mathem-6.5}
William~A. Stein et~al., \emph{{S}age {M}athematics {S}oftware ({V}ersion
  6.5)}, The Sage Development Team, 2015, \url{http://www.sagemath.org}.

\bibitem{Szekely:1986:parad}
G{\'a}bor~J. Sz{\'e}kely, \emph{Paradoxes in probability theory and
  mathematical statistics}, Mathematics and its Applications (East European
  Series), vol.~15, D. Reidel Publishing Co., Dordrecht, 1986, Translated from
  the Hungarian by M{\'a}rta Alp{\'a}r and {\'E}va Unger. \MR{880020
  (88g:60001)}

\end{thebibliography}
\end{document}